\documentclass[11pt,reqno]{article}

\usepackage{graphicx,amsmath,amssymb,setspace,color,xcolor}
\usepackage{srcltx}
\usepackage{amsthm}

\textwidth 17cm \textheight 22.7cm \oddsidemargin 0.0cm

\newcommand{\be}{\begin{eqnarray}}
\newcommand{\ee}{\end{eqnarray}}
\newcommand{\by}{\begin{eqnarray*}}
\newcommand{\ey}{\end{eqnarray*}}
\newcommand{\bn}{\begin{enumerate}}
\newcommand{\en}{\end{enumerate}}
\newcommand{\bi}{\begin{itemize}}
\newcommand{\ei}{\end{itemize}}

\newtheorem{lemma}{Lemma}[section]

\newtheorem{remark}{Remark}[section]

\newtheorem{proposition}{Proposition}[section]
\newtheorem{example}{Example}[section]
\newtheorem{thm}{Theorem}[section]

\newcommand \bet {\beta}
\newcommand \la {\lambda}
\newcommand \tet {\theta}

\newcommand \alp {\alpha}
\newcommand \gam {\gamma}
\newcommand \eps {\varepsilon}
\newcommand \del {\delta}
\newcommand \upsn {\upsilon_n}
\newcommand \elln {\ell_n}
\newcommand \ome {\omega}

\newcommand \R {\mathbb{R}}
\newcommand \N {\mathbb{N}}
\newcommand \E {\mathbb{E}}
\newcommand \pr {\mathbb{P}}

\newcommand \lan {\la_n}
\newcommand \psin {\psi_n}
\newcommand \tetn {\tet_n}
\newcommand \cn {c_n}
\newcommand \Yn {Y_n}
\newcommand \Fn {F_n}
\newcommand \Rn {R_n}
\newcommand \rn {r_n}
\newcommand \An {A_n}

\newcommand \psid {\psi_D}

\newcommand \mO {\mathcal{O}}

\newcommand \sqrtn {\sqrt{n}}

\renewcommand{\theequation}{\arabic{section}.\arabic{equation}}

\numberwithin{equation}{section}
%\numberwithin{The}{section}

\begin{document}

\title{Rate of Convergence of the Probability of Ruin in the Cram\'er-Lundberg Model to its Diffusion Approximation\thanks{This is the final version of the paper. Appears on {\it Insurance: Mathematics and Economics}, Volume 93, July, 2020, 333--340.}}

%{Asymptotic approximation of the probability of ruin \break for large values of the Poisson rate}

\author{Asaf Cohen\footnote{Corresponding author. Department of Mathematics, University of Michigan, Ann Arbor, Michigan 48109, United States, Email: shloshim@gmail.com, web:https://sites.google.com/site/asafcohentau/
}
\and
Virginia R. Young\footnote{Department of Mathematics, University of Michigan, Ann Arbor, Michigan 48109, United States, \hfill \break vryoung@umich.edu.  V. R. Young thanks the Cecil J. and Ethel M. Nesbitt Chair of Actuarial Mathematics for partial financial support.  %We both thank two helpful reviewers for their especially insightful comments.
}}

\maketitle

\begin{abstract}
We analyze the probability of ruin for the {\it scaled} classical Cram\'er-Lundberg (CL) risk process and the corresponding diffusion approximation.  The scaling, introduced by Iglehart \cite{I1969} to the actuarial literature, amounts to multiplying the Poisson rate $\la$ by $n$, dividing the claim severity by $\sqrtn$, and adjusting the premium rate so that net premium income remains constant.  %Therefore, we think of the associated diffusion approximation as being ``asymptotic for large values of $\la$.''

We are the first to use a comparison method to prove convergence of the probability of ruin for the scaled CL process and to derive the rate of convergence.  Specifically, we prove a comparison lemma for the corresponding integro-differential equation and use this comparison lemma to prove that the probability of ruin for the scaled CL process converges to the probability of ruin for the limiting diffusion process.  Moreover, we show that the rate of convergence for the ruin probability is of order $\mO\big(n^{-1/2}\big)$, and we show that the convergence is {\it uniform} with respect to the surplus.  To the best of our knowledge, this is the first rate of convergence achieved for these ruin probabilities, and we show that it is the tightest one in the general case.  For the case of exponentially-distributed claims, we are able to improve the approximation arising from the diffusion, attaining a uniform $\mO\big(n^{-k/2}\big)$ rate of convergence for arbitrary $k \in \N$.   We also include two examples that illustrate our results.

\medskip

{\bf  Keywords}:  Investment analysis; probability of ruin, Cram\'er-Lundberg risk process, diffusion approximation, approximation error.

\medskip

{\bf AMS 2010 Subject Classification:} 45J05, 60G99, 90B20. 

\medskip

{\bf JEL Classification:} G22, C60.

\end{abstract}

%\newpage{}

\section{Introduction}

\setcounter{equation}{0}
\renewcommand{\theequation}
{1.\arabic{equation}}

Approximating and bounding the probability of ruin has a long-standing history in risk theory.  Arguably, the earliest approximation and bound are the well known {\it Cram\'er-Lundberg approximation} and related {\it Lundberg bound}; see Lundberg \cite{L1926} and Cram\'er \cite{C1930}.

The Cram\'er-Lundberg approximation is asymptotic for large values of the surplus process, and in most of the literature in ruin theory, this is what asymptotic refers to.  In this paper, asymptotic refers to the large values of the Poisson rate, together with small claim severity; in that case, the classical Cram\'er-Lundberg risk process approaches a diffusion.  We combine two areas of research, which we discuss below.

The first area is that of using a diffusion process to approximate the discrete risk process, with many small jumps. This method takes advantage of the mathematical tractability of diffusion processes to deduce properties of the process it approximates. This technique was introduced by Kingman \cite{Kingman} in the analysis of a single-server queue and by Iglehart and Whitt \cite{IW1970a, IW1970b} in the context of multiple channel queues.  Since then, it has gained popularity in the stochastic networks community, where it is referred to as the {\it heavy-traffic approximation}.  In this field, the length of the queues are scaled (divided) by $n^{1/2}$ and the rates are scaled (multiplied) by $n$ in such a way that the system is {\it critically loaded} in the sense that the traffic intensity (utilization) converges to $1$ from below. The martingale/functional central limit theorem, then, implies that, in the limit, one attains a diffusion process. The approximation helps in finding asymptotic optimal controls and behavior of complicated systems.  For a basic introduction to the heavy-traffic approximation, please see Chen and Yao \cite{CY2001}, Kushner \cite{K2001}, and the references therein.

Iglehart \cite{I1969} introduced the diffusion approximation to the actuarial literature. He used probabilistic techniques (weak convergence) to prove that the probability of ruin for the scaled model approaches the probability of ruin for the limiting diffusion process.  Grandell \cite{G1977} and Asmussen \cite{A1984} further used the approximating diffusion process to approximate the probability of ruin in finite time; Asmussen's work was inspired by Siegmund \cite{S1979}.  In these works, the limits hold pointwise and no rate of convergence is provided.  More recently, B\"auerle \cite{B2004} used probabilistic techniques to prove limiting results under optimal control of the surplus process.

Instead of probabilistic techniques, we rely on comparison analysis of the integro-differential equation that the probability of ruin solves, and this is the second area of research.  The key element of this technique is an ``increasing'' functional that vanishes when evaluated at the probability of ruin (in the $n$-scaled problem). By perturbing the probability of ruin by $\mO\big(n^{-1/2}\big)$ in both directions and by using the monotonicity of the functional, we get the required bounds (Propositions \ref{prop:psid_scale} and \ref{prop:upsn}). This in turn implies a rate of convergence of order $\mO\big(n^{-1/2}\big)$, uniformly in the initial surplus (Theorem \ref{thm:psin_lim}).  In fact, due to the $\mO\big(n^{-1/2}\big)$ jump sizes, it is the best rate that can be achieved for the general case (Remark \ref{rem:opt}).  Moreover, this is the first time that a comparison principle is used to obtain the rate of convergence of these ruin probabilities.  We believe that this technique can be applied in other actuarial and queueing applications, which we detail in Section \ref{sec:sum}.

Several actuarial researchers used comparison to bound the probability of ruin; however, they worked in the primary problem ($n = 1$) and did not apply comparison to the $n$-scaled problem.  Specifically, Taylor \cite{T1976} bounded the probability of ruin by using the integral version of this equation and comparison results for Volterra integral operators; see Walter \cite{W1970} for these comparison results.  De Vylder and Goovaerts \cite{dVG1984} and Broeckx, De Vylder, and Goovaerts \cite{BdVG1986} continued the work of Taylor \cite{T1976}, using a simpler comparison lemma.

The remainder of the paper is organized as follows.  In Section \ref{sec:back}, we present the Cram\'er-Lundberg model and prove a comparison lemma for the integro-differential equation that determines the probability of ruin in that model.  In Section \ref{sec:asymp}, we scale the model by $n$ and remind the reader of the probability of ruin in the diffusion approximation.  In Section \ref{sec:psid}, we prove that the probability of ruin in the scaled model approaches $\psid$ at a rate of convergence of order $\mO( n^{-1/2})$, uniformly in the initial surplus.  We also strengthen this result for the special case of exponentially distributed claims and show that we cannot strengthen the rate of convergence more generally.  Section \ref{sec:sum} concludes our paper.

\section{Classical risk model and comparison lemma}\label{sec:back}

\setcounter{equation}{0}
\renewcommand{\theequation}
{2.\arabic{equation}}

\subsection{Cram\'er-Lundberg model}\label{sec:model}

Consider an insurer whose surplus process $X = \{ X_t \}_{t \ge 0}$ is described by the classical Cram\'er-Lundberg model, that is, the insurer receives premium income at a constant rate $c$ and pays claims according to a compound Poisson process.  Specifically,
\begin{equation}\label{eq:X}
X_t = x + ct - \sum_{i=1}^{N_t} Y_i,
\end{equation}
in which $X_0 = x \ge 0$ is the initial surplus, $N = \{N_t \}_{t \ge 0}$ is a homogeneous Poisson process with intensity $\la > 0$, and the claim sizes $Y_1, Y_2, \ldots$ are independent and identically distributed, positive random variables, independent of $N$. Let $F_Y$ denote the common cumulative distribution function of $\{Y_i\}_{i \in \N}$. Assume that $Y$ has finite moment generating function $M_Y(u) = \E \big(e^{Yu} \big)$ for $u$ in a neighborhood of $0$, say, for $u \in (-u_0, u_0)$ for some $u_0 > 0$; thus, $\E \big( Y^k \big) < \infty$ for $k = 1, 2, \dots$.   Finally, assume that the premium rate $c$ satisfies $c > \la \E Y$ (otherwise, eventual ruin is certain), and write $c = (1 + \tet) \la \E Y$, with positive risk loading $\tet > 0$.

Define the time of ruin $\tau$ by
\begin{equation}\label{eq:tau}
\tau = \inf\{ t\ge 0: X_t < 0\},
\end{equation}
and define the probability of ultimate ruin by
\begin{equation}\label{eq:psi}
\psi(x) = \pr \big(\tau < \infty \mid X_0 = x \big).
\end{equation}
Recall that $\psi(0) = 1/(1 + \tet)$.  Standard risk theory texts\footnote{See, for example, Section 5.3 in the recent text by Schmidli \cite{S2017}.} demonstrate that one can characterize $\psi$ as the unique classical solution of the following integro-differential equation on $\R^+$ subject to a boundary condition at infinity:
\begin{equation}\label{eq:IDeq}
\begin{cases}
\la v(x) = c v_x(x+) + \la \displaystyle \int_0^x v(x - y) dF_Y(y) + \la S_Y(x), \quad x > 0, \\
\lim \limits_{x \to \infty} v(x) = 0,
\end{cases}
\end{equation}
in which $S_Y = 1 - F_Y$ is $Y$'s survival function.  If we substitute for $c = (1 + \tet) \la \E Y$ in \eqref{eq:IDeq}, then we see that $\psi$ is independent of $\la$.

\begin{remark}
In Section $5.3$, Schmidli {\rm \cite{S2017}} showed that one can rewrite the differential equation in \eqref{eq:IDeq} as an integral equation, as follows:
\begin{equation}\label{eq:Int}
c v(x) = \la \int_0^x v(x - y) S_Y(y) dy + \la \int_x^\infty S_Y(y) dy.
\end{equation}
It is this form of the equation that Taylor {\rm \cite{T1976}}, DeVylder and Goovaerts {\rm \cite{dVG1984}}, and Broeckx, DeVylder, and Goovaerts {\rm \cite{BdVG1986}} used to find bounds for the probability of ruin.  Theorem $1.2.1$ in De Vylder and Goovaerts {\rm \cite{dVG1984}} proves that \eqref{eq:Int} has a unique solution; thus, \eqref{eq:IDeq} has a unique solution, and it equals the probability of ruin.

In future work, we will control the surplus process $X$, and an integral equation of the form \eqref{eq:Int} will not readily apply in that case.  Thus, anticipating that future work, we continue with the integro-differential equation in \eqref{eq:IDeq}.   \qed
\end{remark}

\subsection{Comparison lemma}\label{sec:comp_lem}

We look for bounds for the probability of ruin $\psi$ as sub- and super-solutions of \eqref{eq:IDeq}.  Thus, we begin by proving a comparison lemma, which we use to determine whether a given function is a lower or upper bound for $\psi$.

Define the operator $F$ by
\begin{equation}\label{eq:F}
F \big(x, u(x), u_x(x+), u(\cdot) \big) = - c u_x(x+) - \la \left( \int_0^x u(x - y) dF_Y(y) + S_Y(x) - u(x) \right).
\end{equation}
Note that the probability of ruin $\psi$ satisfies $F \big(x, \psi(x), \psi_x(x+), \psi(\cdot) \big) = 0$ for all $x > 0$.  We expect the probability of ruin $\psi$ to be continuous on all of $\R^+$ and to have continuous first derivatives on $\R^+$ except at points of discontinuity of $F_Y$; thus, we require $u$ and $v$ in Lemma \ref{lem:comp} below to satisfy similar continuity properties.

\begin{lemma}\label{lem:comp}
$($Comparison lemma$)$.  Let $0 \le a < b \le \infty$, and consider functions $u, v \in \mathcal{C} \big([0, b) \big)$ with continuous first derivatives, except possibly at points of discontinuity of $F_Y$, where $u$ and $v$ have left- and right-derivatives.  Suppose $u$ and $v$ are such that $u(x) \le v(x)$ for all $0 \le x \le a$ and $u(b) \le v(b)$.\footnote{If $b = \infty$, then $u(b)$ denotes $\lim_{x \to \infty} u(x)$; similarly, for $v(b)$.}  Furthermore, suppose
\[
F \big(x, u(x), u_x(x+), u(\cdot) \big) < F \big(x, v(x), v_x(x+), v(\cdot) \big), 
\]
for all $x \in (a, b)$, then $u(x) < v(x)$ for all $x \in(a, b)$.
\end{lemma}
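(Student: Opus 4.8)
The plan is to run a maximum-principle argument on the difference $w := v - u$. Since $F$ is affine in the triple $\big(u(x),\, u_x(x+),\, u(\cdot)\big)$ — the $-\la S_Y(x)$ term cancels in a difference — the strict hypothesis $F\big(x, u(x), u_x(x+), u(\cdot)\big) < F\big(x, v(x), v_x(x+), v(\cdot)\big)$ on $(a,b)$ is equivalent to
\[
c\, w_x(x+) \;<\; \la\, w(x) \;-\; \la \int_0^x w(x-y)\, dF_Y(y), \qquad x \in (a,b),
\]
while the data hypotheses say $w \ge 0$ on $[0,a]$ and $w(b) \ge 0$ (with $w(b) := \lim_{x\to\infty} w(x)$ when $b = \infty$). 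We must show $w > 0$ on $(a,b)$, and I would do this in two steps, first the weak inequality $w \ge 0$ and then strictness.

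\emph{Step 1 ($w \ge 0$ on $(a,b)$).} Suppose not, so $w$ is strictly negative somewhere in $(a,b)$. If $b < \infty$, then $w$ is continuous on the compact interval $[a,b]$ and attains its minimum there; since $w \ge 0$ at both endpoints, the minimum is attained at some interior $x^\ast \in (a,b)$ with $w(x^\ast) < 0$. If $b = \infty$, the condition $\lim_{x\to\infty} w(x) \ge 0$ lets me restrict the minimization to a compact subinterval, and again the minimum is attained at an interior $x^\ast$ with $w(x^\ast) < 0$. At an interior minimizer the right-hand derivative satisfies $w_x(x^\ast+) \ge 0$ — this is where the merely one-sided differentiability at jumps of $F_Y$ causes no trouble, since both the displayed inequality and the minimality condition involve only right-hand difference quotients. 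Feeding $w_x(x^\ast+)\ge 0$ into the displayed inequality (and recalling $c>0$) gives $w(x^\ast) > \int_0^{x^\ast} w(x^\ast - y)\, dF_Y(y)$. But $w(x^\ast - y) \ge w(x^\ast)$ for every $y \in [0, x^\ast]$ (it is $\ge 0 > w(x^\ast)$ when $x^\ast - y < a$, and $\ge w(x^\ast)$ by global minimality otherwise), so, using $dF_Y \ge 0$, $F_Y(x^\ast) \le 1$, and $w(x^\ast) < 0$,
\[
\int_0^{x^\ast} w(x^\ast - y)\, dF_Y(y) \;\ge\; w(x^\ast)\, F_Y(x^\ast) \;\ge\; w(x^\ast),
\]
contradicting the previous inequality. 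Hence $w \ge 0$ throughout $(a,b)$, and thus on all of $[0,b)$.

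\emph{Step 2 (strictness) and the main obstacle.} Suppose $w(x_0) = 0$ for some $x_0 \in (a,b)$. By Step 1 this makes $x_0$ a global minimizer of $w$ on $[0,b)$, so $w_x(x_0+) \ge 0$; yet the displayed inequality at $x_0$ reads $c\, w_x(x_0+) < -\la \int_0^{x_0} w(x_0 - y)\, dF_Y(y) \le 0$, the integral being nonnegative because $w \ge 0$, which forces $w_x(x_0+) < 0$ — a contradiction. Hence $w(x) > 0$, i.e.\ $u(x) < v(x)$, for all $x \in (a,b)$. The only points that require genuine care, rather than bookkeeping, are establishing that the minimum of $w$ is attained at an \emph{interior} point — immediate when $b < \infty$, and handled via the boundary condition at infinity when $b = \infty$ — and recognizing that it is precisely the sign structure of the kernel, $dF_Y \ge 0$ together with $F_Y \le 1$, that closes the estimate in Step 1.
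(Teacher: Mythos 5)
Your proposal is correct and takes essentially the same approach as the paper: a maximum-principle argument that locates an interior extremizer of the difference, uses the sign of its one-sided derivative together with $c > 0$, and closes the estimate on the convolution term using that the kernel $dF_Y$ is a sub-probability measure on $[0,x_0]$. The paper works with $u-v$ and argues by three cases in a single pass (extending $u,v$ by $1$ on $\R^-$ and using $F_Y(\infty)=1$ where you use $F_Y(x^\ast)\le 1$), whereas you work with $w=v-u$ and split into a weak-inequality step followed by a strictness step, but the underlying mechanism is identical.
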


\begin{proof} First, if the maximum of $u - v$ on $[a, b]$ occurs at $x = a$ or $x = b$, but not in the interior of $(a, b)$, then $u < v$ in the interior  because $u - v \le 0$ on the boundary, by assumption.

Second, if $u - v$ attains a strictly negative maximum in the interior of $(a, b)$, then we also have $u < v$ in the interior.

Third, if $u - v$ attains a non-negative maximum at $x_0 \in (a, b)$, then $u_x(x_0+) - v_x(x_0+) \le 0$.  It follows that
\begin{align*}
0 &< F \big(x_0, v(x_0), v_x(x_0+), v(\cdot) \big) - F \big(x_0, u(x_0), u_x(x_0+), u(\cdot) \big) \\
&= - c v_x(x_0+) - \la \int_0^{x_0} v(x_0 - y) dF_Y(y) - \la S_Y(x_0) + \la v(x_0)  \\
&\quad + c u_x(x_0+) + \la \int_0^{x_0} u(x_0 - y) dF_Y(y) + \la S_Y(x_0) - \la u(x_0) \\
&\le \la \int_0^{x_0} \big( u(x_0 - y) - v(x_0 - y) \big) dF_Y(y) - \la \big( u(x_0) - v(x_0) \big).
\end{align*}
The last line is non-positive.  Indeed, because $u - v$ reaches a non-negative maximum at $x = x_0 \in (a, b)$, we have $u(x_0) - v(x_0) \ge u(x) - v(x)$ for all $x \in (a, b)$.  Furthermore, because $u(x) \le v(x)$ for all $0 \le x \le a$, we have $u(x_0) - v(x_0) \ge u(x) - v(x)$ for all $x \in (0, b)$.  Without loss of generality, we can extend $u$ and $v$ into $\R^-$ by setting $u(x) = v(x) = 1$ for $x < 0$, from which it follows that $u(x_0) - v(x_0) \ge u(x) - v(x)$ for all $x < b$.  We deduce that $u(x) - u(x_0) \le v(x) - v(x_0)$ for all $x \le x_0$, which implies
\begin{align*}
0 &< \la \int_0^{x_0} \big( u(x_0 - y) - v(x_0 - y) \big) dF_Y(y) - \la \big( u(x_0) - v(x_0) \big) \\
&= \la \int_0^\infty \Big( \big( u(x_0 - y) - u(x_0) \big) - \big( v(x_0 - y) - v(x_0) \big) \Big) dF_Y(y) \le 0,
\end{align*}
a contradiction.  Thus, $u < v$ in $(a, b)$.
\end{proof}

\begin{remark}\label{rem:non-strict}
If we only want non-strict comparison, that is, $u \le v$, then we can weaken the sub-$($super-$)$solution property to $F \big(x, u(x), u_x(x+), u(\cdot) \big) \le F \big(x, v(x), v_x(x+), v(\cdot) \big)$, with \hfill \break $F \big(x, u(x), u_x(x+), u(\cdot) \big)$ finite.  \qed
\end{remark}

In the next example, we use Lemma \ref{lem:comp} and Remark \ref{rem:non-strict} to re-prove the well known Lundberg bound.

\begin{example}
Suppose $R > 0$ solves
\[
c R = \la \big(M_Y(R) - 1 \big),
\]
that is, $R$ is the {\rm adjustment coefficient}, and define $v(x) = e^{-R x}$.  Let $a = 0$ and $b = \infty$ in Lemma {\rm \ref{lem:comp};} then, $\psi(0) = 1/(1 + \tet) \le 1 = v(0)$.  Also, recall, for $x > 0,$ $F \big(x, \psi(x), \psi_x(x+), \psi(\cdot) \big) = 0,$ and
\begin{align*}
F \big(x, v(x), v_x(x), v(\cdot) \big) &=  c R e^{-Rx} - \la \left( \int_0^x e^{-R(x - y)} dF_Y(y) + S_Y(x) - e^{-Rx} \right) \\
&= \la e^{-Rx} \left[ \big(M_Y(R) - 1 \big) - \left( \int_0^x e^{Ry} dF_Y(y) + e^{Rx} S_Y(x) - 1 \right) \right] \\
&= \la e^{-Rx} \int_x^\infty \big(e^{Ry} - e^{Rx} \big) dF_Y(y) \ge 0.
\end{align*}
We deduce from the non-strict version of Lemma {\rm \ref{lem:comp}} that $\psi(x) \le e^{-Rx}$ for $x > 0$, the Lundberg bound.  \qed
\end{example}

\section{Scaled model and diffusion approximation}\label{sec:asymp}

\setcounter{equation}{0}
\renewcommand{\theequation}
{3.\arabic{equation}}

Next, we scale our model by $n > 0$.  In the scaled system, define $\lan = n \la$, so $n$ large is essentially equivalent to $\la$ large. Scale the claim severity by defining $\Yn = Y/\sqrtn$; thus, the variance of total claims during $[0, t]$ is invariant under the scaling, that is, $\lan \E \big(\Yn^2 \big) = \la \E \big(Y^2 \big)$ for all $n > 0$.  Finally, define the premium rate by $\cn = c + (\sqrtn - 1) \la \E Y$; thus, $\cn - \lan \E \Yn = c - \la \E Y$ is also invariant under the scaling.  We can also write $\cn = (\sqrtn + \tet) \la \E Y$, in which $c = (1 + \tet) \la \E Y$; moreover, we can write $\cn = (1 + \tetn) \lan \E \Yn$, in which $\tetn = \tet/\sqrtn$.  The diffusion approximation of the scaled surplus process is, therefore, 
\begin{equation}\label{eq:diff_approx}
\big( \cn - \lan \E \Yn \big)dt + \sqrt{\lan \E \big(\Yn^2 \big)} \, dB_t = \big( c - \la \E Y \big)dt + \sqrt{\la \E \big(Y^2 \big)} \, dB_t,
\end{equation}
for some standard Brownian motion $B = \{ B_t \}_{t \ge 0}$, independent of $n$.  See Iglehart \cite{I1969}, B\"auerle \cite{B2004}, Gerber, Shiu, and Smith \cite{GSS2008}, and Schmidli \cite{S2017} for more information about this scaling.

Let $\psid$ denote the probability of ruin for the diffusion approximation; then, $\psid$ uniquely solves the following boundary-value problem:
\begin{equation}\label{eq:BVP0}
\begin{cases}
0 = \tet \E Y v_x(x) + \dfrac{1}{2} \, \E \big(Y^2\big) v_{xx}(x), \quad x > 0, \\
v(0) = 1, \qquad \lim \limits_{x \to \infty} v(x) = 0.
\end{cases}
\end{equation}
Because $c - \la \E Y = \la \tet \E Y$, we were able to eliminate a factor of $\la$ to obtain the ode in \eqref{eq:BVP0}; thus, we see that $\psid$ is independent of $\la$, as is the probability of ruin in the Cram\'er-Lundberg model $\psi$.  The solution of \eqref{eq:BVP0} is given by
\begin{equation}\label{eq:psid}
\psid(x) = e^{-\gam x},
\end{equation}
for all $x > 0$, in which $\gam$ equals
\begin{equation}\label{eq:gam}
\gam = \dfrac{2 \tet \E Y}{\E \big( Y^2 \big)} \, .
\end{equation}
For an early reference of \eqref{eq:psid}, see Theorem 8 in Iglehart \cite{I1969}.  Because the diffusion in \eqref{eq:diff_approx} approximates the Cram\'er-Lundberg risk process in \eqref{eq:X} with $\la$, $Y$, and $c$ replaced by $\lan$, $\Yn$, and $\cn$, respectively, researchers often say that $\psid$ approximates $\psin$.  In Theorem {\rm \ref{thm:psin_lim}} in the next section, we quantify the degree to which $\psid$ approximates $\psin$.

We end this section with two examples, in which we (attempt to) expand $\psin$ in powers of order $\mO \big( n^{-1/2} \big)$.   In the first example, we compute this expansion when $Y$ is distributed exponentially.

\begin{example}\label{ex:exp}
Suppose $Y \sim Exp(\bet)$ with mean $1/\bet;$ then, $\Yn \sim Exp(\sqrtn \bet)$ and
\begin{align} \label{eq:exp_psin}
\psin(x) &= \dfrac{1}{1 + \tetn} \, \exp \bigg( - \, \dfrac{\tetn x}{(1 + \tetn) \E \Yn} \bigg) = \dfrac{1}{1 + \frac{\tet}{\sqrtn}} \, \exp \Bigg( - \, \dfrac{\tet \bet x}{1 + \frac{\tet}{\sqrtn}} \Bigg).
\end{align}
To obtain the $\mO(n^0)$ term in $\psin$, compute
\[
\lim_{n \to \infty} \psin(x) = e^{-\tet \bet x},
\]
and note that $\gam$ defined in \eqref{eq:gam} equals $\tet \bet$ in this example.  Thus, the probability of ruin for the diffusion approximation $\psid$ is the leading-order term in the expansion of $\psin$.  Next, to obtain the $\mO\big(n^{-1/2} \big)$ term, compute
\[
\lim_{n \to \infty} \sqrtn \left( \psin(x) - e^{-\tet \bet x} \right) = \tet \big( \tet \bet x - 1 \big) e^{- \tet \bet x}.
\]
Thus, we have the following expansion of $\psin$ for $x \ge 0:$ 
\begin{align}
\psin(x) = e^{-\tet \bet x} \left\{ 1 + \dfrac{\tet}{\sqrtn} \, \big( \tet \bet x - 1 \big) \right\} + \mO \big( n^{-1} \big).
\label{eq:exp_asymp}
\end{align}
In Theorem {\rm \ref{thm:exp}} in the next section, we show that this expansion is valid uniformly with respect to $x \ge 0$.  More strongly, we show that we can extend the expansion to order $\mO \big( n^{-k/2} \big)$ $($uniformly in $x)$ for any $k \in \N$.
\qed
\end{example}

In the second example, we compute the same expansion when $Y$ is distributed according to the Gamma with shape parameter $2$.

\begin{example}\label{ex:gamma}
Suppose $Y \sim Gamma(2, \bet)$ with mean $2/\bet;$ then, $\Yn \sim Gamma(2, \sqrtn \bet)$ and
\begin{align}\label{eq:gam_psin}
\psin(x) &= \dfrac{\tetn}{2(1 + \tetn)} \left\{ \dfrac{2 \sqrtn \bet - \Rn}{2 \tetn \cdot \sqrtn \bet - \frac{3 + 4 \tetn}{2} \, \Rn} \, e^{- \Rn x} + \dfrac{2 \sqrtn \bet - \rn}{2 \tetn \cdot \sqrtn \bet - \frac{3 + 4\tetn}{2} \, \rn} \, e^{- \rn x}  \right\} \notag \\
&= \dfrac{\tet}{2 \Big(1 + \frac{\tet}{\sqrtn} \Big)} \left\{ \dfrac{2 \bet - \frac{\Rn}{\sqrtn}}{2 \tet \bet - \frac{3 + 4 \tet/\sqrtn}{2} \, \Rn} \, e^{- \Rn x} + \dfrac{2 \bet - \frac{\rn}{\sqrtn}}{2 \tet \bet - \frac{3 + 4 \tet/\sqrtn}{2} \, \rn} \, e^{- \rn x}  \right\},
\end{align}
in which $\Rn$ is the adjustment coefficient
\[
\Rn =  \dfrac{\sqrtn \bet}{4 \Big(1 + \frac{\tet}{\sqrtn} \Big)} \left[ \left( 3 + \dfrac{4 \tet}{\sqrtn}\right) - \sqrt{9 + \dfrac{8 \tet}{\sqrtn}} \, \right],
\]
and $\rn$ is
\[
\rn = \dfrac{\sqrtn \bet}{4 \Big(1 + \frac{\tet}{\sqrtn} \Big)} \left[ \left( 3 + \dfrac{4 \tet}{\sqrtn}\right) + \sqrt{9 + \dfrac{8 \tet}{\sqrtn}} \, \right].
\]
One can show that $\lim_{n \to \infty} \Rn = \frac{2}{3} \, \tet \bet = \gam$,
\[
\lim_{n \to \infty} \psin(x) = e^{- \, \frac{2}{3} \, \tet \bet x},
\]
and
\[
\lim_{n \to \infty} \sqrtn \left( \psin(x) - e^{- \, \frac{2}{3} \, \tet \bet x} \right) = 
\begin{cases}
\dfrac{8 \tet}{9} \left( \dfrac{2}{3} \, \tet \bet x - 1 \right) e^{- \, \frac{2}{3} \, \tet \bet x}, &\quad x > 0, \\
- \tet, &\quad x = 0.
\end{cases}
\]
The corresponding expansion equals
\begin{equation}\label{eq:gam_GSS}
e^{- \, \frac{2}{3} \, \tet \bet x} \left\{1 + \dfrac{8\tet}{9\sqrtn} \left( \dfrac{2}{3} \, \tet \bet x - 1 \right) \right\} + \mO \big( n^{-1} \big),
\end{equation}
for $x > 0$, and equals
\[
1 - \dfrac{\tet}{\sqrtn} + \mO \big( n^{-1} \big),
\]
for $x = 0$, in which the $\mO \big( n^{-1} \big)$ term depends on $x$ such that, for any given $x$, there is a $C_x > 0$ with $\big| \mO\big(n^{-1}\big) \big| \le C_x \, n^{-1}$.  Note that the expansion is discontinuous at $x = 0$, which implies that this expansion is not valid in some sense, which we will discuss further in Example {\rm \ref{ex:gamma2}} below.  \qed
\end{example}

\section{Asymptotic analysis}\label{sec:psid}

\setcounter{equation}{0}
\renewcommand{\theequation}
{4.\arabic{equation}}

In this section, we use Lemma \ref{lem:comp}, as it applies to the scaled problem, to show that $\psid$ from \eqref{eq:psid} approximates the probability of ruin $\psin$ for the scaled problem to order $\mO \big( n^{-1/2} \big)$ uniformly with respect to $x$. 

Throughout this section, let $\Fn$ denote the operator in \eqref{eq:F} with $c$, $\la$, and $Y$ replaced by $\cn$, $\lan$, and $\Yn$, respectively, and write $\Fn$ as follows:
\begin{align}\label{eq:Fn}
&\Fn\big(x, u(x), u_x(x+), u(\cdot) \big) = \notag \\
&\quad  -\la \left[ \big(\sqrtn + \tet \big) \E Y u_x(x+) + n \left( \int_0^{\sqrtn x} u\Big(x - \frac{t}{\sqrtn} \Big) dF_Y(t) + S_Y(\sqrtn x) - u(x) \right) \right].
\end{align}
Note that $\Fn \big(\psin, \psin(x), \psin'(x+), \psin(\cdot) \big) = 0$ for all $x > 0$, and $\psin$ is independent of $\la$ but clearly dependent on $n$.

In the next two propositions, we modify $\psid$ to obtain lower and upper bounds for $\psin$, respectively.  In Appendix \ref{app:psid}, we present background calculations that inspired these bounds.  We begin by modifying $\psid$ to obtain a lower bound for $\psin$.

\begin{proposition}\label{prop:psid_scale}
Suppose $m$ exists such that
\begin{equation}\label{eq:Zd}
\sup \limits_{d \ge 0} \E \Big( (Y - d)^2 \, e^{\frac{\gam}{\sqrt{m}} (Y - d)} \, \Big| \, Y > d \Big) < \infty.
\end{equation}
%with $M_Y \big(\gam/\sqrt{m} \, \big) < \infty$.
Choose $\eps > 0$, and define $\del$ by
\begin{equation}\label{eq:del}
\del = \max \left[ \tet, \, \sup \limits_{d \ge 0} \big( \gam \E(Y - d \, | \, Y> d) + \eps \big) \right],
\end{equation}
and choose $N > \max \big( \del^2, m \big)$ such that
\begin{equation}\label{eq:N}
\sup \limits_{d \ge 0} \dfrac{\gam^2}{\sqrt{N}} \int_0^1 (1 - \ome) \E\Big( (Y - d)^2 \, e^{\frac{\gam \ome}{\sqrt{N}} (Y - d)} \, \Big|  \, Y > d \Big) d \ome \le \eps.
\end{equation}
Then, for all $n > N$,
\begin{equation}\label{eq:psid_scale}
\left( 1 - \dfrac{\del}{\sqrtn} \right) \psid(x) < \psin(x),
\end{equation}
for all $x \ge 0$.
\end{proposition}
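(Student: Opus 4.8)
The plan is to exhibit the left side of \eqref{eq:psid_scale} as a strict subsolution of the scaled equation $\Fn(\cdot)=0$ and to apply the comparison lemma. In Lemma~\ref{lem:comp} take $a=0$, $b=\infty$, $v=\psin$, and the candidate
\[
u(x):=\Big(1-\frac{\del}{\sqrtn}\Big)\psid(x)=\Big(1-\frac{\del}{\sqrtn}\Big)e^{-\gam x},\qquad x\ge 0,
\]
which is $\mathcal{C}^\infty$ on $[0,\infty)$, so the regularity required by the lemma holds. Since $\Fn\big(x,\psin(x),\psin'(x+),\psin(\cdot)\big)=0$ for $x>0$, it suffices to verify that $u\le\psin$ on $\{0\}$ and at infinity and that $\Fn\big(x,u(x),u_x(x+),u(\cdot)\big)<0$ for every $x>0$. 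The boundary inequalities are immediate: $\lim_{x\to\infty}u(x)=0=\lim_{x\to\infty}\psin(x)$, and, using $n>N>\del^2$ and $\del\ge\tet$, one has $u(0)=1-\del/\sqrtn\le 1-\tet/\sqrtn<1/(1+\tet/\sqrtn)=\psin(0)$, the last step because $(1-\tet/\sqrtn)(1+\tet/\sqrtn)=1-\tet^2/n<1$. This strict inequality at the origin, combined with the conclusion $u<\psin$ on $(0,\infty)$ coming from Lemma~\ref{lem:comp}, gives \eqref{eq:psid_scale} for all $x\ge0$.

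The heart of the argument is the subsolution estimate for $x>0$. Put $K:=1-\del/\sqrtn\in(0,1)$ and $d:=\sqrtn x$. Conditioning the claim integral in \eqref{eq:Fn} on $\{Y>d\}$ (where $x-Y/\sqrtn<0$), using the exact remainder $e^z=1+z+z^2\!\int_0^1(1-\ome)e^{\ome z}\,d\ome$ for $z=\gam Y/\sqrtn$, and invoking the identity $\gam\,\E(Y^2)=2\tet\,\E Y$ from \eqref{eq:gam} to cancel the $\mO(\sqrtn)$ and $\mO(1)$ terms, one obtains
\[
-\tfrac1\la\,\Fn\big(x,u(x),u_x(x+),u(\cdot)\big)=Ke^{-\gam x}A_n+n\,\E\!\Big[\big(1-Ke^{-\gam(x-Y/\sqrtn)}\big)\mathbf 1_{\{Y>d\}}\Big],
\]
where $A_n:=\gam^2\!\int_0^1(1-\ome)\,\E\!\big[Y^2(e^{\gam\ome Y/\sqrtn}-1)\big]\,d\ome$. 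Since $Y>0$, $A_n>0$, so the first summand is strictly positive and it remains to bound the second (boundary-layer) summand below. On $\{Y>d\}$ the elementary estimate $1-Ke^{\gam(Y-d)/\sqrtn}\ge\del/\sqrtn-\big(e^{\gam(Y-d)/\sqrtn}-1\big)$ and another application of the same Taylor remainder give
\[
n\,\E\!\Big[\big(1-Ke^{-\gam(x-Y/\sqrtn)}\big)\mathbf 1_{\{Y>d\}}\Big]\ge\sqrtn\,\pr(Y>d)\Big[\del-\gam\,\E(Y-d\mid Y>d)-\tfrac1{\sqrtn}B_n(d)\Big],
\]
with $B_n(d):=\gam^2\!\int_0^1(1-\ome)\,\E\!\big((Y-d)^2e^{\gam\ome(Y-d)/\sqrtn}\mid Y>d\big)\,d\ome$. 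The bracket is nonnegative: by \eqref{eq:del}, $\del-\gam\,\E(Y-d\mid Y>d)\ge\eps$; and since $n>N$ and $Y-d>0$ on $\{Y>d\}$, monotonicity of the exponential reduces \eqref{eq:N} to $\tfrac1{\sqrtn}B_n(d)\le\eps$. Hence $\Fn\big(x,u(x),u_x(x+),u(\cdot)\big)\le-\la Ke^{-\gam x}A_n<0$ for all $x>0$ (if $\pr(Y>d)=0$ the boundary-layer summand is $0$ and positivity still follows from $A_n>0$), and Lemma~\ref{lem:comp} concludes.

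The principal obstacle is the boundary-layer summand $n\,\E[(1-Ke^{-\gam(x-Y/\sqrtn)})\mathbf 1_{\{Y>d\}}]$. For $x$ near $0$ the claim integral is truncated at the small value $d=\sqrtn x$, so a naive Taylor expansion of $\psin$ around $\psid$ fails there (this is already visible in the discontinuity at $x=0$ of the expansion in Example~\ref{ex:gamma}), and one cannot simply expand and read off a sign. The overshoot splitting $Y=d+(Y-d)$ on $\{Y>d\}$ turns the difficulty into uniform-in-$d$ control of the conditional moments $\E\big((Y-d)^2e^{\gam\ome(Y-d)/\sqrtn}\mid Y>d\big)$, which is exactly why $\del$ is taken as a supremum over thresholds $d\ge0$ with a strictly positive buffer $\eps$, and why $N$ is chosen via \eqref{eq:N}. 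The finiteness hypothesis \eqref{eq:Zd} is precisely what makes these conditional moments bounded uniformly in $d$, so that a valid $\del$ and $N$ exist.
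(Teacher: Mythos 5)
Your proof is correct and follows essentially the same route as the paper's: apply the comparison lemma with $\ell_n=(1-\del/\sqrtn)\psid$, verify the boundary inequalities, and show $\Fn(\ell_n)<0$ by splitting into a bulk term (negative by the cancellation $\tet\gam\E Y=\tfrac12\gam^2\E(Y^2)$ and the integral Taylor remainder) and a truncated boundary-layer term controlled uniformly in $d=\sqrtn x$ via \eqref{eq:del}--\eqref{eq:N}. The only difference is cosmetic: you organize the computation in expectation form with the cleaner identity $-\tfrac1\la\Fn(\ell_n)=Ke^{-\gam x}A_n+n\E\big[(1-Ke^{-\gam(x-Y/\sqrtn)})\mathbf 1_{\{Y>d\}}\big]$, whereas the paper keeps everything as Stieltjes integrals against $dF_Y$ and reduces the truncated term to inequality \eqref{ineq:del2}; the two are algebraically equivalent.
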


\begin{proof}
Because $\del \ge \tet$, we have
\[
\left( 1 - \dfrac{\del}{\sqrtn} \right) \psid(0) = 1 - \dfrac{\del}{\sqrtn} \le 1 - \dfrac{\tet}{\sqrtn} < \dfrac{1}{1 + \frac{\tet}{\sqrtn}} = \psin(0).
\]
Also, $\lim_{x \to \infty} \left( 1 - \frac{\del}{\sqrtn} \right) \psid(x) = 0 = \lim_{x \to \infty} \psin(x)$.

Next, consider $\Fn$ evaluated at $\elln$, in which $\elln(x) = \big(1 - \del/\sqrtn \, \big) \psid(x)$, and assume without loss of generality that $n > \del^2$:
\begin{align}\label{eq:Fn_del}
&\Fn \big( x, \elln(x), \elln'(x), \elln(\cdot) \big) \notag \\
&\quad =  \la \left( 1 - \dfrac{\del}{\sqrtn} \right) e^{- \gam x} \left\{ \big( \sqrtn + \tet \big) \E Y \gam - n \int_0^\infty \left( e^{\frac{\gam t}{\sqrtn}} - 1 \right) dF_Y(t) \right\} \notag \\
&\qquad + n \la \int_{\sqrtn x}^\infty \left( \left( 1 - \dfrac{\del}{\sqrtn} \right) e^{\gam \left(\frac{t}{\sqrtn} - x \right)} - 1 \right) dF_Y(t) \notag \\
&\quad \propto \big( \sqrtn + \tet \big) \E Y \gam - n \int_0^\infty \left( e^{\frac{\gam t}{\sqrtn}} - 1 \right) dF_Y(t) + n \int_{\sqrtn x}^\infty \left(  e^{ \frac{\gam t}{\sqrtn}} - \dfrac{1}{1 - \frac{\del}{\sqrtn}} \, e^{\gam x}\right) dF_Y(t) \notag \\
&\quad \propto  - \int_0^\infty \left( e^{\frac{\gam t}{\sqrtn}} - 1 - \dfrac{\gam t}{\sqrtn} - \dfrac{\gam^2 t^2}{2 n} \right) dF_Y(t) + \int_{\sqrtn x}^\infty \left(  e^{ \frac{\gam t}{\sqrtn}} - \dfrac{1}{1 - \frac{\del}{\sqrtn}} \, e^{\gam x}\right) dF_Y(t).
\end{align}  
The first integral is automatically negative; thus, if we find values of $\del$ and $N > \del^2$ for which the second integral is non-positive for all $n > N$ and for all $x \ge 0$, then Lemma \ref{lem:comp} implies that $\elln(x) < \psin(x)$ for all $x \ge 0$ and for all $n > N$.  To that end, consider the following inequality:
\[
\int_{\sqrtn x}^\infty \left(  e^{ \frac{\gam t}{\sqrtn}} - \dfrac{1}{1 - \frac{\del}{\sqrtn}} \, e^{\gam x}\right) dF_Y(t) \le 0.
\]
If $S_Y(\sqrtn x) = 0$, then the left side is identically $0$, so suppose that $S_Y(\sqrtn x) > 0$.  After replacing $\sqrtn x$ by $d$ and dividing by $e^{\gam x} S_Y(d)$, the above inequality becomes
\[
\int_d^\infty \left(  e^{ \frac{\gam}{\sqrtn}(t - d)} - \dfrac{1}{1 - \frac{\del}{\sqrtn}} \right) \dfrac{dF_Y(t)}{S_Y(d)} \le 0,
\]
for $d \ge 0$, or equivalently,
\[
\int_d^\infty \left(  e^{ \frac{\gam}{\sqrtn} (t - d)} - 1 \right)  \dfrac{dF_Y(t)}{S_Y(d)} \le \dfrac{\frac{\del}{\sqrtn}}{1 - \frac{\del}{\sqrtn}} \, .
\]
If we find $\del$ that satisfies the following stronger inequality, then the above sequence of inequalities holds:
\begin{equation}\label{ineq:del1}
\int_d^\infty \left(  e^{ \frac{\gam}{\sqrtn}(t - d)} - 1 \right) \dfrac{dF_Y(t)}{S_Y(d)} \le \dfrac{\del}{\sqrtn} \, .
\end{equation}
Rewrite the integrand from the left side of inequality \eqref{ineq:del1} as follows, with $z = t - d$:
\[
e^{ \frac{\gam z}{\sqrtn}} - 1 = \dfrac{\gam z}{\sqrtn} + \dfrac{\gam^2 z^2}{n} \int_0^1 (1 - \ome) e^{\frac{\gam z}{\sqrtn} \, \ome} d\ome.
\]
Thus, inequality \eqref{ineq:del1} is equivalent to
\[
\int_d^\infty \left(  \dfrac{\gam(t - d)}{\sqrtn} + \dfrac{\gam^2 (t - d)^2}{n} \int_0^1 (1 - \ome) e^{\frac{\gam(t - d)}{\sqrtn} \, \ome} d \ome \right) \dfrac{dF_Y(t)}{S_Y(d)} \le \dfrac{\del}{\sqrtn} \, ,
\]
or, after multiplying both side by $\sqrtn$ and switching the order of integration,
\begin{equation}\label{ineq:del2}
\gam \E(Y - d \, | \, Y > d) + \dfrac{\gam^2}{\sqrtn} \int_0^1 (1 - \ome) \, \E \Big( (Y - d)^2 \, e^{\frac{\gam \ome}{\sqrtn} (Y - d)} \, \Big| \, Y > d \Big) d \ome \le \del.
\end{equation}
Note that the left side decreases with increasing $n$.  It follows that if we define $\del$ and $N$ as in \eqref{eq:del} and \eqref{eq:N}, respectively, then inequality \eqref{ineq:del2} holds for all $d \ge 0$ and all $n > N$, which implies that $F_n$ evaluated at $\elln$ is negative for all $x \ge 0$ and all $n > N$.  The conclusion in \eqref{eq:psid_scale}, then, follows from Lemma \ref{lem:comp} because $\Fn$ evaluated at $\psin$ equals $0$.
\end{proof}

In the following proposition, we modify $\psid$ to obtain an upper bound for $\psin$.

\begin{proposition}\label{prop:upsn}
Define the function $\upsn$ by
\begin{equation}\label{eq:upsn}
\upsn(x) = e^{- \left(\gam - \frac{\alp}{\sqrtn} \right) x} = \psid(x) \, e^{\frac{\alp}{\sqrtn} \, x}.
\end{equation}
If
\begin{equation}\label{eq:alp}
\alp > \dfrac{\gam^2}{3} \, \dfrac{\E \big(Y^3 \big)}{\E \big(Y^2\big)} \, ,
\end{equation}
then there exists $N > 0$ such that, for all $n > N$,
\begin{equation}\label{eq:upsn_bnd}
\psin(x) < \upsn(x),
\end{equation}
for all $x \ge 0$.
\end{proposition}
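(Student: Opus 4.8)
The plan is to show $\upsn$ is a strict supersolution of the scaled integro-differential equation and then apply Lemma \ref{lem:comp} with $u = \psin$, $v = \upsn$, $a = 0$, $b = \infty$ --- the mirror image of the argument in Proposition \ref{prop:psid_scale}. Throughout, restrict to $n$ large enough that $\gam - \alp/\sqrtn > 0$; writing $\bet_n := \gam - \alp/\sqrtn$, the function $\upsn(x) = e^{-\bet_n x}$ of \eqref{eq:upsn} is then strictly decreasing with $\lim_{x\to\infty}\upsn(x) = 0$, and $\bet_n/\sqrtn \le \gam/\sqrtn$ eventually lies in the interval on which $M_Y$ is finite. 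The two boundary hypotheses of Lemma \ref{lem:comp} are immediate: $\psin(0) = 1/(1 + \tet/\sqrtn) < 1 = \upsn(0)$, and $\lim_{x\to\infty}\psin(x) = 0 = \lim_{x\to\infty}\upsn(x)$.

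Next I would evaluate $\Fn$ at $\upsn$. Substituting $\upsn(x) = e^{-\bet_n x}$, $\upsn'(x) = -\bet_n e^{-\bet_n x}$, and $\upsn(x - t/\sqrtn) = e^{-\bet_n x} e^{\bet_n t/\sqrtn}$ into \eqref{eq:Fn} and factoring out $\la e^{-\bet_n x}$ gives
\[
\Fn\big(x, \upsn(x), \upsn'(x), \upsn(\cdot)\big) = \la e^{-\bet_n x}\left[ (\sqrtn + \tet)\,\E Y\, \bet_n - n\left( \int_0^{\sqrtn x} e^{\bet_n t/\sqrtn}\, dF_Y(t) + e^{\bet_n x} S_Y(\sqrtn x) - 1 \right) \right].
\]
With $d = \sqrtn x$ (so $e^{\bet_n x} = e^{\bet_n d/\sqrtn}$), splitting the integral term gives
\[
\int_0^{\sqrtn x} e^{\bet_n t/\sqrtn}\, dF_Y(t) + e^{\bet_n x} S_Y(\sqrtn x) - 1 = \big( M_Y(\bet_n/\sqrtn) - 1 \big) - \int_d^\infty \big( e^{\bet_n t/\sqrtn} - e^{\bet_n d/\sqrtn} \big)\, dF_Y(t),
\]
and the last integral is non-negative because $\bet_n > 0$ and $t \ge d$ on its range. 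Hence $\Fn$ at $\upsn$ is bounded below by $\la e^{-\bet_n x}\, m_n$, where $m_n := (\sqrtn + \tet)\,\E Y\,\bet_n - n\big(M_Y(\bet_n/\sqrtn) - 1\big)$ does not depend on $x$; it therefore suffices to prove $m_n > 0$ for all $n$ past some threshold.

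For that I would use $e^z = 1 + z + \tfrac12 z^2 + \tfrac12 z^3 \int_0^1 (1-\ome)^2 e^{\ome z}\, d\ome$ with $z = \bet_n Y/\sqrtn$ and take expectations (finiteness of $M_Y$ near $0$ justifies the interchange and keeps the remainder finite), getting $M_Y(\bet_n/\sqrtn) - 1 = \frac{\bet_n}{\sqrtn}\E Y + \frac{\bet_n^2}{2n}\E(Y^2) + \frac{\bet_n^3}{2 n^{3/2}}\int_0^1 (1 - \ome)^2\, \E\big(Y^3 e^{\bet_n \ome Y/\sqrtn}\big)\, d\ome$. Multiplying by $n$, the $\sqrtn\,\E Y\, \bet_n$ terms cancel in $m_n$, and, using $\gam = 2\tet\E Y/\E(Y^2)$ to write $\tet\E Y = \tfrac12 \gam \E(Y^2)$, the surviving lower-order terms combine as $\tet \E Y\, \bet_n - \tfrac12 \bet_n^2\,\E(Y^2) = \tfrac12 \E(Y^2)\,\bet_n(\gam - \bet_n) = \tfrac{\alp}{2\sqrtn}\,\E(Y^2)\,\bet_n$. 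Thus
\[
\sqrtn\, m_n = \tfrac12 \alp\, \E(Y^2)\, \bet_n - \tfrac12 \bet_n^3 \int_0^1 (1 - \ome)^2\, \E\big(Y^3 e^{\bet_n \ome Y/\sqrtn}\big)\, d\ome \;\longrightarrow\; \tfrac12 \alp \gam\, \E(Y^2) - \tfrac16 \gam^3\, \E(Y^3)
\]
as $n \to \infty$, by dominated convergence (dominating again via $M_Y$ near $0$), $\bet_n \to \gam$, and $\int_0^1 (1-\ome)^2\, d\ome = \tfrac13$. The limit is strictly positive precisely under hypothesis \eqref{eq:alp}, so $m_n > 0$ for all $n > N$ with $N$ large. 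Then $\Fn$ at $\upsn$ is strictly positive on $(0,\infty)$ while $\Fn$ at $\psin$ vanishes there, so Lemma \ref{lem:comp} yields $\psin(x) < \upsn(x)$ for $x > 0$; together with the strict inequality at $x = 0$, this is \eqref{eq:upsn_bnd}.

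The obstacle is bookkeeping rather than conceptual: one must carry the expansion of $M_Y$ to exactly third order, watch the $\sqrtn$-terms cancel and the $n^{-1/2}$-terms recombine --- via the definition of $\gam$ --- into the positive constant $\tfrac12 \alp \gam \E(Y^2) - \tfrac16 \gam^3 \E(Y^3)$, and confirm that the two extra pieces are harmless: the tail integral has a favourable sign, and the order-$n^{-1}$ remainder inside $m_n$ is dominated by its strictly positive order-$n^{-1/2}$ main term once $n$ is large. A secondary point is that $\upsn$ dominates $\psin$ only for $n$ past a threshold, which is exactly what secures simultaneously $\bet_n > 0$ (for the behaviour at infinity and the sign of the tail), $\bet_n/\sqrtn$ inside the domain of $M_Y$, and $m_n > 0$; one takes $N$ to be the largest of these requirements.
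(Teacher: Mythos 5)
Your proof is correct and follows essentially the same route as the paper: verify the boundary conditions, split $\Fn(\upsn)$ into the $x$-independent piece $m_n$ (the paper's $A_n$, which you rewrite via $M_Y(\bet_n/\sqrtn)$) plus a non-negative tail integral, expand to third order to isolate the $n^{-1/2}$-coefficient $\tfrac12\alp\gam\E(Y^2)-\tfrac16\gam^3\E(Y^3)$, invoke \eqref{eq:alp} for positivity, and apply Lemma~\ref{lem:comp}. Your treatment of the remainder (integral Taylor remainder plus dominated convergence) is a bit more explicit than the paper's informal choice of $N$ so that the $\mO(n^{-1})$ term is dominated, but this is a presentational refinement, not a different method.
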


\begin{proof}
$\psin(0) < 1 = \upsn(0)$.  Also, $\lim_{x \to \infty} \psin(x) = 0 = \lim_{x \to \infty} \upsn(x)$, if $n > (\alp/\gam)^2$.

Next, consider $\Fn$ evaluated at $\upsn$:
\begin{align}\label{eq:Fn_alp}
&\Fn \big( x, \upsn(x), \upsn'(x), \upsn(\cdot) \big) \notag \\
&\quad =  \la e^{- \left( \gam - \frac{\alp}{\sqrtn} \right) x} \left\{ \big( \sqrtn + \tet \big) \E Y \left( \gam - \dfrac{\alp}{\sqrtn} \right) - n \int_0^\infty \left( e^{ \left(\gam - \frac{\alp}{\sqrtn} \right) \frac{t}{\sqrtn}} - 1 \right) dF_Y(t) \right\} \notag \\
&\qquad + n \la \int_{\sqrtn x}^\infty \left( e^{ \left(\gam - \frac{\alp}{\sqrtn} \right) \left(\frac{t}{\sqrtn} - x \right)} - 1 \right) dF_Y(t).
\end{align}
The last line of \eqref{eq:Fn_alp} is automatically non-negative if $n > (\alp/\gam)^2$.  The expression in curly brackets is independent of $x$; denote it by $\An$.  If we find values of $\alp$ and $N > (\alp/\gam)^2$ for which $\An$ is positive for all $n > N$, then Lemma \ref{lem:comp} implies that $\psin(x) < \upsn(x)$ for all $x \ge 0$ and for all $n > N$.  Expand the exponential in the integrand in $\An$ to obtain
\begin{align*}
\An &= \big( \sqrtn + \tet \big) \E Y \left( \gam - \dfrac{\alp}{\sqrtn} \right) \\
&\quad  - n \int_0^\infty \left( e^{\left(\gam - \frac{\alp}{\sqrtn} \right) \frac{t}{\sqrtn}} - 1 - \left(\gam - \frac{\alp}{\sqrtn} \right) \frac{t}{\sqrtn} - \left(\gam - \frac{\alp}{\sqrtn} \right)^2 \dfrac{t^2}{2n} - \left(\gam - \frac{\alp}{\sqrtn} \right)^3 \dfrac{t^3}{6 n^{3/2}} \right) dF_Y(t) \\
&\quad - n \left( \left(\gam - \frac{\alp}{\sqrtn} \right) \dfrac{\E Y}{\sqrtn} + \left(\gam - \frac{\alp}{\sqrtn} \right)^2 \dfrac{\E \big(Y^2 \big)}{2n} + \left(\gam - \frac{\alp}{\sqrtn} \right)^3 \dfrac{\E \big(Y^3 \big)}{6 n^{3/2}} \right) \\
&= \dfrac{\gam}{2 \sqrtn} \left( \alp \E \big(Y^2 \big) - \dfrac{\gam^2}{3} \, \E \big(Y^3 \big) \right) + \mO \big( n^{-1} \big).
\end{align*}
Choose $\alp$ as in \eqref{eq:alp}; then, the first term in the above expression is strictly positive.  Next, choose $N > (\alp/\gam)^2$ so that the absolute value of the remainder term in $A_N$ (if it is negative) is less than the first term.  It follows that $\An > 0$ for that choice of $\alp$ and for all $n > N$.  The conclusion in \eqref{eq:upsn_bnd}, then, follows from Lemma \ref{lem:comp} because $\Fn$ evaluated at $\psin$ equals $0$ and $\Fn$ evaluated at $v_n$ is positive.
\end{proof}

In the following theorem, we combine the results of Propositions \ref{prop:psid_scale} and \ref{prop:upsn}.

\begin{thm}\label{thm:psin_lim}
If \eqref{eq:Zd} holds,  then there exist $C > 0$ and $N > 0$ such that, for all $n > N$ and $x \ge 0$, 
\begin{align}\label{ASAF1}
\big| \psin(x) - \psid(x) \big| \le \dfrac{C}{\sqrtn} \, .
\end{align}
Recall from \eqref{eq:psid} and \eqref{eq:gam} that $\psid(x) = e^{-\gam x}$, with $\gam = 2\tet \E Y \big/\E\big(Y^2\big)$.
\end{thm}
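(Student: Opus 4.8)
The plan is to obtain \eqref{ASAF1} simply by sandwiching $\psin$ between the lower bound of Proposition \ref{prop:psid_scale} and the upper bound of Proposition \ref{prop:upsn}, and then estimating the two resulting gaps uniformly in $x \ge 0$. First I would check that the hypotheses of both propositions are in force: the standing assumption \eqref{eq:Zd} is exactly the hypothesis of Proposition \ref{prop:psid_scale}, so (after fixing some $\eps > 0$) that proposition supplies $\del$ as in \eqref{eq:del} and a threshold $N_1$ beyond which \eqref{eq:psid_scale} holds; and the moment assumption on $Y$ from Section \ref{sec:model} guarantees $\E\big(Y^3\big), \E\big(Y^2\big) < \infty$, so we may fix once and for all some $\alp$ satisfying \eqref{eq:alp}, and Proposition \ref{prop:upsn} supplies a threshold $N_2$ beyond which \eqref{eq:upsn_bnd} holds.

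Next I would record the two one-sided estimates. For $n > N_1$, inequality \eqref{eq:psid_scale} gives $\psin(x) - \psid(x) > -\frac{\del}{\sqrtn}\,\psid(x)$, and since $\psid(x) = e^{-\gam x} \in (0,1]$ by \eqref{eq:psid}, this yields $\psin(x) - \psid(x) > -\del/\sqrtn$ for all $x \ge 0$. For $n > N_2$, \eqref{eq:upsn_bnd} together with \eqref{eq:upsn} gives $\psin(x) - \psid(x) < e^{-\gam x}\big(e^{\frac{\alp}{\sqrtn} x} - 1\big)$. The only point that needs a little care is bounding this last quantity uniformly in $x \ge 0$. Restricting to $n > 4\alp^2/\gam^2$, so that $\gam - \alp/\sqrtn \ge \gam/2 > 0$, and using the elementary inequality $e^u - 1 \le u e^u$ for $u \ge 0$ with $u = \alp x/\sqrtn$, one gets
\[
e^{-\gam x}\big(e^{\frac{\alp}{\sqrtn} x} - 1\big) \le \frac{\alp x}{\sqrtn}\, e^{-\left(\gam - \frac{\alp}{\sqrtn}\right) x} \le \frac{\alp}{\sqrtn}\, x\, e^{-\frac{\gam}{2} x} \le \frac{2\alp}{e\gam\sqrtn},
\]
where the last step uses $\sup_{x \ge 0} x e^{-\gam x/2} = 2/(e\gam)$.

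Finally I would take $N = \max\big(N_1, N_2, 4\alp^2/\gam^2\big)$ and $C = \max\big(\del,\ 2\alp/(e\gam)\big)$; then for all $n > N$ and all $x \ge 0$ the two displayed estimates combine to give $-C/\sqrtn < \psin(x) - \psid(x) < C/\sqrtn$, which is \eqref{ASAF1}. I do not expect any genuine obstacle here: the theorem is essentially a repackaging of Propositions \ref{prop:psid_scale} and \ref{prop:upsn}, and the only substantive step is the elementary uniform estimate above, whose mild subtlety is that one must take $n$ large enough for the decay rate $\gam - \alp/\sqrtn$ to stay bounded away from $0$ so that $e^{-\gam x}\big(e^{\alp x/\sqrtn}-1\big)$ is globally controlled rather than merely pointwise in $x$.
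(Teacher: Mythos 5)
Your proposal is correct and follows essentially the same route as the paper: combine Propositions \ref{prop:psid_scale} and \ref{prop:upsn} to sandwich $\psin$ between $(1-\del/\sqrtn)\psid$ and $\upsn$, note the lower gap is trivially at most $\del/\sqrtn$ in absolute value, and then bound the upper gap $e^{-(\gam-\alp/\sqrtn)x}-e^{-\gam x}$ uniformly in $x$. The only (cosmetic) difference is in this last step: the paper computes the exact maximizer of $e^{-(\gam-\alp/\sqrtn)x}-e^{-\gam x}$, arriving at the bound $\bigl(1-\tfrac{\alp}{\gam\sqrtn}\bigr)^{\gam\sqrtn/\alp}\cdot\tfrac{\alp/\sqrtn}{\gam-\alp/\sqrtn}$, whereas you use the elementary inequality $e^u-1\le ue^u$ together with $\sup_{x\ge0}xe^{-\gam x/2}=2/(e\gam)$ after taking $n$ large enough that $\gam-\alp/\sqrtn\ge\gam/2$; both yield an $\mO(n^{-1/2})$ bound uniform in $x\ge0$ and are equally valid.
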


\begin{proof}
From Propositions \ref{prop:psid_scale} and \ref{prop:upsn} it follows that 
$$
\left(1 - \dfrac{\delta}{\sqrtn}\right) e^{-\gam x} < \psin(x) < e^{-\left(\gam - \frac{\alp}{\sqrtn} \right)x}.
$$
Subtracting $e^{-\gam x}$ from each side yields,
\begin{align}\notag
- \, \dfrac{\del}{\sqrtn} \, e^{-\gam x} < \psin(x) - e^{-\gam x} < e^{-\left(\gam - \frac{\alp}{\sqrtn}\right)x} - e^{-\gam x}.
\end{align}
Clearly, the left side is bounded below by $-\del/\sqrtn$. Basic calculus implies that, for every $n > \big(\alp/\gam\big)^2$, the right side is bounded above by 
$$
\left(1-\frac{\alp}{\gam\sqrtn}\right)^{\frac{\gam\sqrtn}{\alp}}\left(\frac{\frac{\alp}{\sqrtn}}{{\gam-\frac{\alp}{\sqrtn}}}\right).
$$
The first factor converges to $e^{-1}$, and the second factor is of order $\mO\big(n^{-1/2} \big)$. By combining this upper bound with the lower bound, we deduce inequality \eqref{ASAF1}.
\end{proof}

\begin{remark}
Theorem {\rm \ref{thm:psin_lim}} asserts that the rate of convergence of $\psi_n$ to $\psid$ is of order $\mO\big(n^{-1/2} \big)$, and, moreover, that the convergences is uniform over $x \in [0,\infty)$.  By using probabilistic techniques and relying on convergence in distribution of the underlying processes, others prove the pointwise convergence $\lim_{n \to \infty}\psin(x) = \psid(x)$ without estimating the rate.  The first to do so in the actuarial literature is Iglehart {\rm \cite{I1969};} for more recent work in this vein, see B\"auerle {\rm \cite{B2004}}.  \qed
\end{remark}

\begin{remark}\label{rem:lim}
From the proof of Theorem {\rm \ref{thm:psin_lim}}, relative to the limit $e^{-\gam x}$, we see that the relative error between $\psin$ and $e^{-\gam x}$ is bounded as follows$:$
\[
- \, \dfrac{\del}{\sqrtn} < \dfrac{\psin(x) - e^{-\gam x}}{e^{-\gam x}} < e^{\frac{\alp}{\sqrtn}x} - 1.
\]
Both lower and upper bounds are of order $\mO\big(n^{-1/2}\big)$, but the upper bound is not uniform in $x$.

That said, consider the Cram\'er-Lundberg asymptotic formula; see, for example, Theorem $5.7$ in Schmidli {\rm \cite{S2017}:}
\begin{equation}\label{eq:CLasym}
\lim_{x \to \infty} \psin(x) e^{\Rn x} = \dfrac{\frac{\tet}{\sqrtn} \E \Big( \frac{Y}{\sqrtn} \Big)}{\E \Big( \frac{Y}{\sqrtn} \, e^{\frac{\Rn Y}{\sqrtn}}\Big) -  \left( 1 + \frac{\tet}{\sqrtn} \right) \E \Big( \frac{Y}{\sqrtn} \Big)} = \dfrac{\tet \E Y}{\sqrtn \, \E \Big( Y e^{\frac{\Rn Y}{\sqrtn}}\Big) -  \left( \sqrtn + \tet \right) \E Y} \, ,
\end{equation}
in which $\Rn$ is the adjustment coefficient, that is, $\Rn$ is the positive root of
\[
0 = \E \Big( e^{\frac{rY}{\sqrtn}} \Big) - \left\{ 1 + \left( 1 + \dfrac{\tet}{\sqrtn} \right) \E \bigg( \dfrac{Y}{\sqrtn} \bigg) r \right\}.
\]
One can show that $\lim \limits_{n \to \infty} \Rn = \gam$, and
\begin{equation}\label{eq:CLlimit}
\lim_{n \to \infty} \lim_{x \to \infty} \psin(x) e^{\Rn x} = 1.
\end{equation}
See Appendix \ref{app:B} for a proof of these two limits.  Furthermore, from Theorem {\rm \ref{thm:psin_lim}}, we know that $\lim \limits_{n \to \infty} \psin(x) e^{\Rn x} = 1$ for all $x \ge 0$.  
\qed
\end{remark}

When $Y$ is exponentially distributed as in Example \ref{ex:exp}, we can strengthen the result of Theorem \ref{thm:psin_lim}.  Define the function $f:[0,\infty) \times [0, \tet] \to \R$ by 
$$
f(z,w) = \frac{1}{1+w} \exp\left(- \frac{z}{1+w} \right).
$$
Define $\eps_n=n^{-1/2}$; then, for any $n \in \N$ and $x \in [0, \infty)$, 
$$
\psin(x) = f (\tet \bet x, \tet \eps_n).
$$
Finally, define the function $g:[0,\infty) \times [0,1] \to \R$ by $g(x, y) = f(\tet \bet x, \tet y)$.  By using the function $f$, we provide, for the exponential case, a uniform asymptotic approximation for $\psin$ of any arbitrary order.

\begin{thm}\label{thm:exp}
If $Y \sim Exp(\bet)$ with mean $1/\bet$, then for any $k \in \N$, there exists $C = C(k) > 0$ such that, for all $n \in \N$ and $x \ge 0$, 
\begin{align}\label{ASAF2}
\left| \psin(x) - \sum_{m=0}^k\dfrac{\eps_n^m}{m!} \, \dfrac{\partial^m}{\partial y^m}g(x,0) \right| \le C\eps_n^{k+1} = \dfrac{C}{n^{(k+1)/2}} \, .
\end{align}
\end{thm}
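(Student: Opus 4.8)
The plan is to bypass the comparison machinery entirely and work directly with the closed-form expression for $\psin$ recorded in Example~\ref{ex:exp}. Writing $u(y) = 1/(1+\tet y)$, one has, for every $n \in \N$ (so that $\eps_n = n^{-1/2} \in (0,1]$), the exact identity
\[
\psin(x) = g(x,\eps_n) = u(\eps_n)\, e^{-\tet\bet x\, u(\eps_n)},
\]
and the right-hand side, viewed as a function of the parameter $y$ with $u = u(y)$, extends to a real-analytic function on a neighborhood of $[0,1]$ because $\tet>0$ keeps $1+\tet y$ bounded away from $0$. First I would apply Taylor's theorem with Lagrange remainder in $y$ about $y=0$, evaluated at $y=\eps_n$, to get
\[
\psin(x) = \sum_{m=0}^{k} \frac{\eps_n^m}{m!}\,\frac{\partial^m}{\partial y^m} g(x,0) + \frac{\eps_n^{k+1}}{(k+1)!}\,\frac{\partial^{k+1}}{\partial y^{k+1}} g(x,\xi),
\]
for some $\xi = \xi(n,x) \in (0,\eps_n)$. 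Thus \eqref{ASAF2} reduces to the single claim that
\[
M_{k+1} := \sup_{x\ge 0,\ y\in[0,1]} \left| \frac{\partial^{k+1}}{\partial y^{k+1}} g(x,y) \right| < \infty,
\]
after which one takes $C = M_{k+1}/(k+1)!$ and uses $\eps_n^{k+1} = n^{-(k+1)/2}$.

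To prove this bound I would show by induction on $j \ge 0$ that
\[
\frac{\partial^{j}}{\partial y^{j}} g(x,y) = P_j\big(x, u(y)\big)\, e^{-\tet\bet x\, u(y)},
\]
where $P_j$ is a polynomial in two variables with coefficients depending only on $j$, $\tet$, $\bet$. The base case is $P_0(x,u) = u$; the inductive step uses $\partial_y u = -\tet u^2$ together with the product and chain rules, giving the recursion $P_{j+1}(x,u) = -\tet u^2\big(\partial_u P_j(x,u) - \tet\bet x\, P_j(x,u)\big)$, which is again a polynomial (of $x$-degree at most $j+1$). For $y \in [0,1]$ we have $u(y) \in [\,1/(1+\tet),\,1\,]$, so bounding each power of $u$ by $1$ and replacing each coefficient of $P_j$ by its modulus produces a polynomial $Q_j$ with $Q_j(x) \ge |P_j(x,u(y))|$ for all such $y$; hence
\[
\left| \frac{\partial^{j}}{\partial y^{j}} g(x,y) \right| \le Q_j(x)\, e^{-\tet\bet x/(1+\tet)}, \qquad x\ge 0,\ y\in[0,1].
\]
Since $\tet,\bet>0$, the right-hand side is a polynomial times a decaying exponential in $x$, hence bounded on $[0,\infty)$; taking $j=k+1$ yields $M_{k+1}<\infty$.

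The only point requiring any thought is this last one: differentiating $k+1$ times in $y$ brings down, through the chain rule applied to $e^{-\tet\bet x u}$, powers of $x$ up to $x^{k+1}$, and these must be absorbed by the exponential. They are, precisely because $u(y)$ is bounded below by the positive constant $1/(1+\tet)$ uniformly over $y\in[0,1]$ — equivalently, because for exponential claims $\psin$ is a single exponential $e^{-\tet\bet x/(1+\tet\eps_n)}$ whose decay rate stays bounded away from $0$ uniformly in $n$. This is where the exponential hypothesis is genuinely used; without it one only has Theorem~\ref{thm:psin_lim}. Everything else is a routine estimate, and in particular neither Lemma~\ref{lem:comp} nor condition \eqref{eq:Zd} is invoked.
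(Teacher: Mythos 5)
Your proposal is correct and follows essentially the same route as the paper: both start from the closed‑form expression for $\psin$ in the exponential case, apply Taylor's theorem with Lagrange remainder in the scaling parameter, show by induction that the $(k+1)$st derivative is a polynomial (in $x$ and in $1/(1+\theta y)$) times the surviving exponential factor, and then conclude uniform boundedness because the decay rate $\theta\beta/(1+\theta y)$ stays bounded away from $0$ on the compact parameter interval. The only differences are notational (you differentiate $g$ directly in $y$ via $u(y)=1/(1+\theta y)$, while the paper differentiates the auxiliary $f(z,w)$ in $w$), and your explicit recursion $P_{j+1}=-\theta u^2(\partial_u P_j-\theta\beta x\,P_j)$ makes the induction a bit more concrete than the paper's statement of it.
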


\begin{proof}
Because $g$ linear transforms $f$, it is sufficient to show that 
\begin{align}\notag
\limsup_{w \to 0^+} \; \sup_{z \ge 0} \dfrac{1}{w^{k+1}} \left|
f(z, w) - \sum_{m=0}^k \dfrac{w^m}{m!}  \, \dfrac{\partial^m}{\partial w^m}f(z,0)
\right| < \infty.
\end{align}
By Taylor's expansion, for any $z \ge 0$ and $w \in [0, \tet]$, there exists $w_z \in [0,w]$ such that 
\begin{align}\notag
f(z,w) - \sum_{m=0}^k\dfrac{w^m}{m!} \, \dfrac{\partial^m}{\partial w^m}f(z,0) = \dfrac{w^{k+1}}{(k+1)!} \, \dfrac{\partial^{k+1}}{\partial w^{k+1}}f(z,w_z).  
\end{align}
Hence, the problem reduces to showing that 
\begin{align}\notag
\limsup_{w\to 0^+} \; \sup_{z \ge 0} \,
\left|
\dfrac{\partial^{k+1}}{\partial w^{k+1}}f(z,w_z)
\right| < \infty,
\end{align}
which in turn follows from the stronger inequality
\begin{align}\label{desired}
\sup_{w \in [0, \, \tet], \, z \ge 0} \,
\left|
\dfrac{\partial^{k+1}}{\partial w^{k+1}}f(z,w)
\right| < \infty.
\end{align}
One can show by induction that, for any $k \in \N$, there exists a bivariate polynomial $P_{k+1}$ such that 
\begin{align}\notag
\dfrac{\partial^{k+1}}{\partial w^{k+1}}f(z,w)=\frac{P_{k+1}(z,(1+w))}{(1+w)^{k+1}}f(z,w).
\end{align}
Since the domain of the variable $w$ is a compact set that is bounded away from $-1$, that is $[0,\theta]$, it follows that it is sufficient to show that for any $\ell \in \N$, 
\begin{align}\notag
\sup_{w \in [0, \, \tet], \, z \ge 0} \,
z^\ell f(z,w) < \infty,
\end{align}
which is clearly true. Hence, we obtain that \eqref{desired} holds, which finishes the proof of this theorem.
\end{proof}

\begin{example}\label{ex:gamma2}
Another way to think of \eqref{ASAF2} when $Y \sim Exp(\bet)$ and $k = 1$ is that the limit
\[
\lim_{n \to \infty} n \left( \psin(x) - e^{-\tet \bet x} \left\{ 1 + \dfrac{\tet}{\sqrtn} \, \big( \tet \bet x - 1 \big) \right\} \right)
\]
is finite for all $x \ge 0$ and bounded uniformly with respect to $x$.  However, when $Y \sim Gamma(2, \bet)$ as in Example {\rm \ref{ex:gamma}}, because of the discontinuity in the expansion at $x = 0$, the corresponding limit for $x > 0$ vanishingly small is approximately
\[
\lim_{n \to \infty} n \left( \dfrac{1}{1 + \frac{\tet}{\sqrtn}} - 1 + \dfrac{8\tet}{9\sqrtn} \right) = \lim_{n \to \infty} \left( - \, \dfrac{\tet \sqrtn}{9} \right) = - \infty.
\]
Thus, we cannot expect to do better than Theorem {\rm \ref{thm:psin_lim}} for a general claim severity $Y$.
\qed
\end{example}

\begin{remark}\label{rem:opt}
%Theorem \ref{thm:psin_lim} states that $\psi_n(x)$ converges to $e^{-\gam x}$ and that the convergence is of order $1/\sqrtn$ uniformly over $[0,\infty)$. 
In light of the comment at the end of Example {\rm \ref{ex:gamma2}}, we motivate the $\mO\big(n^{-1/2} \big)$ rate of convergence.  Note that the pre-limit process weakly converges to a Brownian motion $($with drift$)$, and by the Skorokhod representation theorem, we can think about the convergence as uniform over compact time intervals. Now, because the pre-limit process has jumps of order $\mO\big(n^{-1/2} \big)$, it follows that at the first hitting time of $0$ of the Brownian motion, the pre-limit process is in an $\mO\big(n^{-1/2} \big)$-neighborhood of $0$.  Arguing by contradiction, assume for a moment that the rate in \eqref{ASAF1} can be improved to $o\big(n^{-1/2} \big)$, then the probability of ever hitting $0$, when starting at $c_1/\sqrtn$, is $e^{-\gam c_1/\sqrtn} + o\big(n^{-1/2} \big) \approx 1 + c_2/\sqrtn + o\big(n^{-1/2} \big)$, for some scalar $c_2 \in \R$.  The $c_2/\sqrtn$-term yields that the difference between the hitting probabilities is of order $\mO\big(n^{-1/2} \big)$, contradicting the improvement we conjectured.  \qed
\end{remark}

\section{Summary and future research}\label{sec:sum}

We proved a comparison lemma (Lemma \ref{lem:comp}) for the integro-differential equation that determines the probability of ruin for the Cram\'er-Lundberg (CL) model.  By using that comparison lemma, we showed, in Theorem \ref{thm:psin_lim}, that the rate of convergence of $\lim_{n \to \infty} \psin = \psid$ is of order $\mO\big(n^{-1/2}\big)$ and is {\it uniform} in $x \ge 0$.  Generally, one cannot improve on this rate of convergence, which we demonstrated in Example \ref{ex:gamma2} by example and discussed in Remark \ref{rem:opt}.  That said, for exponentially distributed claims, in Theorem \ref{thm:exp}, we showed that we can approximate $\psin$ up to any order, also uniformly in $x \ge 0$.

Many of the references in the bibliography also consider the finite-time ruin problem, which we did not address in this paper.  So, in future work we will find an asymptotic result for the probability of ruin in finite time, parallel to Theorem \ref{thm:psin_lim}.  More importantly, we will consider optimal control of the surplus process via reinsurance or optimal dividends.  Diffusion approximations (DA) are commonly applied to the surplus process before applying controls because the problem becomes tractable.  However, the optimal strategy in the CL case can be much different than the one obtained in the DA case.  For example, when minimizing the probability of ruin under the CL model, the optimal per-claim retention strategy for small values of surplus is to retain all of one's claims; by contrast, under the DA model, the optimal per-claim retention is strictly positive as surplus approaches $0$.  It would be interesting to see if we obtain an asymptotic result for the controlled probability of ruin.

Gerber, Shiu, and Smith \cite{GSS2008} addressed approximations to the dividend problem.  Also, B\"auerle \cite{B2004} considered the large-$\la$ approximation of the dividend problem and proved that, as the Poisson rate increases without bound with the corresponding scaling of claim severity as in this paper, the optimal value function converges to the one under the DA as $\la$ goes to infinity.  It would be interesting to determine if the rate of convergence of the optimal barrier is of order $\mO\big(n^{-1/2}\big)$, as suggested by the work in this paper.

Another line of research we will pursue is to improve the existing estimates for busy periods and sojourn times in queueing systems. Recall, from the introduction, that the diffusion approximation is often used in stochastic networks and is called the heavy-traffic approximation.  Also, integro-differential equations are commonly used to estimate expectations and probabilities, see, for example, \cite{T1976,PSZ,ABV}.  Hence, it would be natural to formulate the integro-differential for the {\it scaled} system and apply the method in this paper to attain convergence plus its rate for some magnitudes of interest.

\appendix

\section{$\Fn$ evaluated at $\psid$}\label{app:psid}

\setcounter{equation}{0}
\renewcommand{\theequation}
{A.\arabic{equation}}

In this appendix, we present the calculations that inspired Propositions \ref{prop:psid_scale} and \ref{prop:upsn}.

\begin{align}\label{eq:Fn_psid}
& \Fn \big( x, \psid(x), (\psid)_x(x), \psid(\cdot) \big) \notag \\
&\quad =  \la e^{- \gam x} \left\{ \big( \sqrtn + \tet \big) \E Y \gam - n \int_0^\infty \left( e^{\frac{\gam t}{\sqrtn}} - 1 \right) dF_Y(t) \right\} + n \la \int_{\sqrtn x}^\infty \left( e^{\gam \left(\frac{t}{\sqrtn} - x \right)} - 1 \right) dF_Y(t) \notag \\
&\quad =  \la e^{- \gam x} \left\{ \big( \sqrtn + \tet \big) \E Y \gam - n \int_0^\infty \left( \dfrac{\gam t}{\sqrtn} + \dfrac{\gam^2 t^2}{2n} \right) dF_Y(t) \right\} + n \la \int_{\sqrtn x}^\infty \left( e^{\gam \left(\frac{t}{\sqrtn} - x \right)} - 1 \right) dF_Y(t) \notag \\
&\qquad - n \la e^{- \gam x} \int_0^\infty \left( e^{\frac{\gam t}{\sqrtn}} - 1 - \dfrac{\gam t}{\sqrtn} - \dfrac{\gam^2 t^2}{2n} \right) dF_Y(t).
\end{align}
The terms in the curly brackets cancel, and we are left with
\begin{align*}
& \Fn \big( x, \psid(x), (\psid)_x(x), \psid(\cdot) \big) \\
&\quad = - n \la e^{- \gam x} \int_0^\infty \left( e^{\frac{\gam t}{\sqrtn}} - 1 - \dfrac{\gam t}{\sqrtn} - \dfrac{\gam^2 t^2}{2n} \right) dF_Y(t) + n \la \int_{\sqrtn x}^\infty \left( e^{\gam \left(\frac{t}{\sqrtn} - x \right)} - 1 \right) dF_Y(t) \\
&\quad = - n \la e^{- \gam x} \int_0^\infty \dfrac{\gam^3 t^3}{2 n^{3/2}} \int_0^1 (1 - \ome)^2 e^{\frac{\gam t}{\sqrtn} \, \ome} d \ome dF_Y(t) + n \la \int_{\sqrtn x}^\infty \left( e^{\gam \left(\frac{t}{\sqrtn} - x \right)} - 1 \right) dF_Y(t) \\
&\quad = - \la e^{- \gam x} \, \dfrac{\gam^3}{2 \sqrtn} \int_0^1 (1 - \ome)^2 \, \E \Big( Y^3 e^{\frac{\gam \ome}{\sqrtn} \, Y} \Big) d \ome + n \la \int_{\sqrtn x}^\infty \left( e^{\gam \left(\frac{t}{\sqrtn} - x \right)} - 1 \right) dF_Y(t).
\end{align*}
The first term is negative and of order $\mO \big(n^{-1/2} \big)$.  If we set $d = \sqrtn x$, then the second term becomes
\begin{align*}
&n \la \int_{\sqrtn x}^\infty \left( e^{\gam \left(\frac{t}{\sqrtn} - x \right)} - 1 \right) dF_Y(t) = n \la \int_d^\infty \left( e^{\frac{\gam}{\sqrtn} (t - d)} - 1 \right) dF_Y(t) \\
&\quad = n \la \int_d^\infty \left( e^{\frac{\gam}{\sqrtn}(t - d)} - 1 \right) dF_Y(t) = n \la S_Y(d) \int_d^\infty  \dfrac{\gam(t - d)}{\sqrtn} \int_0^1 e^{\frac{\gam(t - d)}{\sqrtn} \, \ome} d \ome \, \dfrac{dF_Y(t)}{S_Y(d)} \\
&\quad = \sqrtn \, \gam \la S_Y(d) \int_0^1 \E \Big((Y - d) \, e^{\frac{\gam \ome}{\sqrtn} \, (Y - d)} \, \Big| \, Y > d \Big) d \ome,
\end{align*}
which is positive and of order $\mO \big(\sqrtn \, \big)$.

To obtain a lower bound for $\psin$, we modify $\psid$ so that the corresponding modified second term is negative, and that is the gist of Proposition \ref{prop:psid_scale}.  The scaling does not affect the negative sign of the first term, and it makes the second term negative.  Also, note that the scaling effectively subtracts a term of order $\mO \big( n^{-1/2} \big)$ from $\psid$.

To obtain an upper bound for $\psin$, we modify $\psid$ so that the corresponding modified first term is positive, and that is the gist of Proposition \ref{prop:upsn}.  The additional exponent of $\alp/\sqrtn$ does not affect the positive sign of the second term, and it makes the first term positive.  Also, note that the modification of $\psid$'s exponent effectively adds a term of order $\mO \big( n^{-1/2} \big)$ to $\psid$.

\section{Proof of two limits stated in Remark \ref{rem:lim}}\label{app:B}

\setcounter{equation}{0}
\renewcommand{\theequation}
{B.\arabic{equation}}

First, we will prove that
\begin{equation}\label{eq:Rn_lim}
\lim_{n \to \infty} \Rn = \gam,
\end{equation}
in which $\Rn$ is the positive zero of the function $f_n$ defined by
\[
f_n(r) = \E \Big( e^{\frac{rY}{\sqrtn}} \Big) - \left\{ 1 + \left( 1 + \dfrac{\tet}{\sqrtn} \right) \E \bigg( \dfrac{Y}{\sqrtn} \bigg) r \right\},
\]
and we assume that $\Rn > 0$ exists.  From pages 90-91 of Schmidli \cite{S2017}, we know that
\[
\Rn < \gam,
\]
for all $n \in \N$.

Let $\eps > 0$; then, there exists $N$ such that
\begin{equation}\label{eq:eps_bnd}
\dfrac{\gam^2}{\sqrt{N} \, \E\big( Y^2 \big)} \, \E \bigg( Y^3 e^{\frac{\gam Y}{\sqrt{N}}} \, \bigg) < \eps.
\end{equation}
Then, by expanding the exponential in $f_n$, we obtain
\[
0 = \E \bigg( e^{\frac{\Rn Y}{\sqrtn}} - 1 - \dfrac{\Rn Y}{\sqrtn} - \dfrac{\Rn^2 Y^2}{2n} \bigg) + \dfrac{\Rn}{n} \left\{ \dfrac{1}{2} \, \Rn \E \big( Y^2 \big) - \tet \E Y \right\},
\]
or equivalently,
\[
0 = \E \bigg( \dfrac{\Rn^3 Y^3}{2 n^{3/2}} \, \int_0^1 (1 - \ome)^2 e^{\frac{\Rn Y}{\sqrtn} \, \ome} d \ome \bigg)  +  \dfrac{\Rn}{n} \left\{ \dfrac{1}{2} \, \Rn \E \big( Y^2 \big) - \tet \E Y \right\},
\]
or
\[
0 = \dfrac{\Rn^2}{\sqrtn \, \E \big( Y^2 \big)} \, \int_0^1 (1 - \ome)^2 \E \bigg( Y^3 e^{\frac{\Rn Y}{\sqrtn} \, \ome} \bigg) d \ome + (\Rn - \gam) .
\]
Then, \eqref{eq:eps_bnd} implies that, for $n > N$, we have
\[
0 < \eps + (\Rn - \gam),
\]
or
\[
0 < \gam - \Rn < \eps.
\]
Thus, we have proved the limit in \eqref{eq:Rn_lim}.

Next, we will prove \eqref{eq:CLlimit}.  Expand  the exponential in the denominator in the expression for $\lim \limits_{x \to \infty} \psin(x) e^{\Rn x}$ in \eqref{eq:CLasym}, specifically,
\begin{align*}
&\sqrtn \, \E \bigg( Y e^{\frac{\Rn Y}{\sqrtn}} \bigg) -  \left( \sqrtn + \tet \right) \E Y = \sqrtn \, \E \bigg( Y \bigg( e^{\frac{\Rn Y}{\sqrtn}} - 1 - \dfrac{\Rn Y}{\sqrtn} \bigg) \bigg)  + \Rn \E \big(Y^2 \big) - \tet \E Y \\
&= \sqrtn \, \E \bigg( Y \cdot \dfrac{\Rn^2 Y^2}{n} \, \int_0^1 (1 - \ome) e^{\frac{\Rn Y}{\sqrtn} \, \ome} d \ome \bigg) + \Rn \E \big(Y^2 \big) - \tet \E Y \\
&= \dfrac{\Rn^2}{\sqrtn} \int_0^1 (1 - \ome) \E \bigg( Y^3 e^{\frac{\Rn Y}{\sqrtn} \, \ome} \bigg) d \ome + (\Rn - \gam) \E \big(Y^2 \big) + \tet \E Y,
\end{align*}
and the limit of this denominator as $n$ goes to infinity equals $\tet \E Y$ because the first two terms go to $0$; thus, we have proved the limit in \eqref{eq:CLlimit}.

\vspace{\baselineskip}\noindent
\textbf{Acknowledgement:} The authors thank two anonymous referees for their suggestions that improved the presentation of the paper.

\end{document}